\newtheorem{theorem}{Theorem}
\newtheorem{proposition}[theorem]{Proposition}
\newtheorem{remark}[theorem]{Remark}
\newcommand{\R}{{\mathbb R}}
\newcommand{\eps}{\varepsilon}
\def\1#1{\hbox{$\eqref{lv1}_{#1}$}}
\begin{document}

\title{Liouville theorems, universal estimates \\
 and periodic solutions \\
 for cooperative parabolic Lotka-Volterra systems}

\author{Pavol Quittner\footnote{Supported in part by VEGA grant 1/0319/15.}
\\ \\
 \small Department of Applied Mathematics and Statistics, Comenius
University \\
 \small Mlynsk\'a dolina, 84248 Bratislava, Slovakia \\
 \tt quittner@fmph.uniba.sk
}

\date{}

\maketitle

\begin{abstract} 
We consider positive solutions of cooperative parabolic Lotka-Volterra 
systems with equal diffusion coefficients, in bounded and unbounded domains.
The systems are complemented by the Dirichlet or Neumann boundary conditions.
Under suitable assumptions on the coefficients of the reaction terms, 
these problems possess both global solutions and
solutions which blow up in finite time.
We show that any solution $(u,v)$ defined on the time interval $(0,T)$
satisfies a universal estimate of the form 
$$u(x,t)+v(x,t)\leq C(1+t^{-1}+(T-t)^{-1}),$$ 
where $C$ does not depend on $x,t,u,v,T$. 
In particular, this bound guarantees global existence and boundedness
for threshold solutions lying on the borderline between
blow-up and global existence.
Moreover, this bound yields optimal blow-up rate estimates 
for solutions which blow up in finite time.
Our estimates are based on new Liouville-type theorems for 
the corresponding scaling invariant parabolic system 
and require an optimal restriction on the space dimension $n$: $n\leq5$.
As an application we also prove the existence of time-periodic positive 
solutions if the coefficients are time-periodic.
Our approach can also be used for more general parabolic systems.
\end{abstract}

%=====================================================================
\section{Introduction} \label{intro}
We consider nonnegative solutions of the Lotka-Volterra system
\begin{equation} \label{lv1}
\left.\begin{aligned}
u_t-d_1\Delta u &= u(a_1-b_1u+c_1v), \\
v_t-d_2\Delta v &= v(a_2-b_2v+c_2u), 
\end{aligned}\quad\right\}
\qquad x\in \Omega, \ t\in(0,T),
\end{equation}
where
$\Omega$ is a (possibly unbounded) domain in $\R^n$
with a uniformly $C^2$ smooth boundary $\partial\Omega$,
$T\in(0,\infty]$,
$d_1,d_2$ are positive constants and
\begin{equation} \label{aibici}
a_i,b_i,c_i\in L^\infty(\Omega\times(0,\infty))\ \hbox{ for }\ i=1,2.
\end{equation}
Except for some marginal results in 
Theorem~\ref{thm-liouv2} and Remark~\ref{rem2}
we will always assume
\begin{equation} \label{bc-const}
b_1,b_2,c_1,c_2>0, \quad c_1c_2>b_1b_2.
\end{equation}
By $\nu$ we denote the outer unit normal on $\partial\Omega$
and by \1D or \1N we denote system \eqref{lv1} complemented
by the Dirichlet boundary conditions $u=v=0$ on $\partial\Omega\times(0,T)$
or the Neumann boundary conditions $u_\nu=v_\nu=0$ on $\partial\Omega\times(0,T)$,
respectively.
Notice that \1D=\1N=\eqref{lv1} if $\Omega=\R^n$.
If $\Omega$ is bounded then by $\Lambda_1$ we denote
the least eigenvalue of the negative Dirichlet Laplacian in $\Omega$.
Except for Proposition~\ref{propBH}, 
by a solution we will always mean a nonnegative classical solution.

First consider the Dirichlet problem \1D and 
assume that $\Omega$ is bounded and 
the coefficients $a_i,b_i,c_i$ are constant.
Then some solutions of \1D blow up in finite time,
see \cite[the proof of Theorem 12.6.1]{Pao}. 
Blow-up rates of such solutions have been studied
in \cite{Lin,LW} but an upper estimate of the rate
(which is usually much more difficult
than a lower estimate) has only been established if $n=1$.
Under suitable additional assumptions, \1D possesses also
nontrivial global solutions and steady states:
If we assume
$a_1/d_1,a_2/d_2<\Lambda_1$, for example,
then the existence of global solutions follows from
the stability of the zero solution.
If, in addition, $n\leq5$ then there exists a positive steady state,
and the assumption $n\leq5$ is also necessary
if $\Omega$ is starshaped and $a_1/d_1=a_2/d_2$, see \cite{Lou}.

If one considers the Neumann problem \1N
with  $\Omega$ bounded and $a_i,b_i,c_i$ constant
then some solutions blow up again
and even ``diffusion-induced blow-up'' occurs:
There exist blow-up solutions of \1N 
such that the solutions of the corresponding system
of ODEs exist globally, see \cite{LNN}.
On the other hand, nontrivial global solutions also exist
if $a_1,a_2<0$, for example.

The existence of blow-up and global solutions of system \eqref{lv1}
is also known in the case of non-constant coefficients, see \cite{LLP}
and the references therein, for example.
If the problem \1D or \1N
possesses both global solutions
and solutions which blow up in finite time
then one can study so-called threshold solutions,
i.e.~solutions lying on the borderline between global existence
and blow-up.
The study of such solutions is difficult even for the scalar problem
\begin{equation} \label{Fuj1}
\left.\begin{aligned}
u_t-\Delta u &= cu^2, &\qquad& x\in\Omega, \ t>0,\\
 u &= 0, &\qquad& x\in\partial\Omega, \ t>0,
\end{aligned}\quad\right\}
\end{equation}
where $c$ is a positive constant and $\Omega$ is bounded.
Notice that problem \eqref{Fuj1} is just a very special case of \1D
(if we set $u=v$, $d_1=d_2=1$, $a_1=a_2=0$, $b_1=b_2=1$ and $c_1=c_2=c+1$).
All threshold solutions of \eqref{Fuj1} 
are global and bounded if $n\leq5$ 
but they may be global and unbounded
if $n=6$ and they even may blow up in finite time if $n>6$, 
see \cite{QS} and the references therein. 
It should be mentioned that the threshold solutions 
of \eqref{Fuj1} are --- in some sense --- the most interesting ones.
In particular, any positive steady state of \eqref{Fuj1}
is a threshold solution and 
the $\omega$-limit set of any global bounded positive threshold solution
of \eqref{Fuj1}
is nonempty and consists of positive steady states.
As far as we know, the behavior of threshold solutions
of problems \1D or \1N has not been studied yet.

If $n\leq5$ and the coefficient $c$ in \eqref{Fuj1}
is a positive periodic function of $t$ then one can use 
a priori estimates of global solutions of \eqref{Fuj1}
in order to prove the existence of nontrivial periodic solutions,
see \cite{Est,QNoDEA,BPQ}.
Again, the existence of periodic solutions does not seem to be known 
for problems \1D or \1N with periodic coefficients in the presence of blow-up
(see \cite{LLP}, for example, for the existence of periodic solutions
of \1D in the case $c_1c_2<b_1b_2$ which excludes blow-up).

In this paper we will assume that $n\leq5$, $d_1=d_2=1$,
the coefficients $a_i,b_i,c_i$ satisfy \eqref{bc-const}
and suitable regularity assumptions and we will prove that
all solutions of \1D or \1N satisfy 
universal a priori estimates of the form 
\begin{equation} \label{est1}
u(x,t)+v(x,t)\leq C(1+t^{-1}+(T-t)^{-1}),\qquad x\in\Omega,\ t\in(0,T),
\end{equation}
where $C$ does not depend on $x,t,u,v,T$.
These estimates guarantee global existence and boundedness
of threshold solutions and also optimal blow-up rate estimates
of solutions which blow up in finite time. 
In addition,
if $\Omega$ is bounded, $a_i,b_i,c_i$ are time-periodic and
$a_1,a_2<\Lambda_1$ then 
these estimates
combined with a homotopy argument
guarantee the existence of
a time-periodic positive solution of \1D.
The homotopy used in our proof 
is quite different
from those used for the scalar problem \eqref{Fuj1} in \cite{Est}
or the steady-state problem for \1D
in \cite{Lou}.

Our estimates are based on Liouville-type theorems
for corresponding scaling invariant systems.
In fact, we will prove Liouville-type theorems
for more general systems of the form \eqref{sys-MSS}.
Their proofs rely
on the fact that the components of any entire solution
of such system have to be proportional, i.e.
the problem can be reduced to a scalar problem.  
Arguments of this type have recently been used 
in \cite{MSS,QS-S} in the case of elliptic systems.

%=====================================================================
\section{Main results} \label{results}

We will first specify a class of coefficients of system \eqref{lv1}
such that the constant $C$ in the universal estimate \eqref{est1} 
will not depend on the coefficients in this class. 
Fix
$\eps_0,M_0>0$ and a continuous function
$\omega_0:[0,\infty)\to[0,\infty)$ satisfying $\omega_0(0)=0$,
and set
$$ \begin{aligned}
{\cal F} ={{\cal F}(\eps_0,M_0,\omega_0)}
 := \{\phi:&\Omega\times(0,\infty)\to[\eps_0,M_0]: \\
 & |\phi(x,t)-\phi(y,s)|\leq\omega_0(|x-y|+|t-s|)\\
 &\ \hbox{for all }(x,t),(y,s)\in\Omega\times(0,\infty)\}.
\end{aligned}$$ 
We assume that
\begin{equation} \label{bc1}
\left.
\begin{aligned}
 & a_1,a_2\in L^\infty(\Omega\times(0,\infty))\ \hbox{ satisfy }\ |a_1|,|a_2|\leq M_0, \\  
 & b_1,b_2,c_1,c_2\in {\cal F}(\eps_0,M_0,\omega_0)\ \hbox{ satisfy }\ c_1c_2\geq b_1b_2+\eps_0,
\end{aligned}
\quad\right\}
\end{equation}
and consider system \eqref{lv1} with $d_1=d_2=1$, i.e.  
\begin{equation} \label{lv11}
\left.\begin{aligned}
u_t-\Delta u &= u(a_1-b_1u+c_1v), \\
v_t-\Delta v &= v(a_2-b_2v+c_2u), 
\end{aligned}\quad\right\}
\qquad x\in \Omega, \ t\in(0,T).
\end{equation}
Our first result guarantees universal estimates of positive solutions of
\eqref{lv11}.

\begin{theorem} \label{thm-ub}
Let $\Omega$ be an arbitrary nonempty open subset of $\R^n$, 
$n\leq5$, $T\in(0,\infty]$,
$\eps_0,M_0>0$, and $\omega_0:[0,\infty)\to[0,\infty)$ be
a continuous function satisfying $\omega_0(0)=0$.
Assume also that $a_1,a_2,b_1,b_2,c_1,c_2$ satisfy \eqref{bc1}.
Then there exists a positive constant $C=C(\eps_0,M_0,\omega_0)$
such that any positive solution $(u,v)$ of \eqref{lv11} satisfies
\begin{equation} \label{est4}
 u(x,t)+v(x,t)\leq C\bigl(C_1+t^{-1}+(T-t)^{-1}
  +C_2\hbox{\rm dist}^{-2}(x,\partial\Omega)\bigr)
\end{equation}
for all $(x,t)\in \Omega\times(0,T)$,
where $C_1=C_2=1$,
the constant
$C$ is independent of  $x,t,u,v,\Omega,T,a_1,a_2,b_1,b_2,c_1,c_2$,
$$ (T-t)^{-1}:=0\ \hbox{ if }\ T=\infty,\qquad
\hbox{\rm dist}^{-2}(x,\partial\Omega):=0\ \hbox{ if }\ \Omega=\R^n,$$
and the solution $(u,v)$ need not satisfy any boundary or initial condition.

If $\Omega$ is uniformly $C^2$ smooth
and the solution $(u,v)$ satisfies 
the homogeneous Dirichlet or Neumann boundary condition 
on $\partial\Omega\times(0,T)$
then \eqref{est4} is true with $C=C(\Omega,\eps_0,M_0,\omega_0)$, 
$C_1=1$ and $C_2=0$, i.e.~estimate \eqref{est1} is true. 

If $a_1=a_2=0$  and $b_1,b_2,c_1,c_2$ are constants,
then \eqref{est4} is true with $C_1=0$ and $C_2=1$.
\end{theorem}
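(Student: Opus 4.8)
The plan is to derive the universal estimate \eqref{est4} from Liouville-type theorems for the scaling-limit system, via the standard rescaling (doubling) argument of Poláčik–Quittner–Souplet, adapted to this cooperative system. First I would reduce to proving a pointwise estimate on $M(x,t):=u(x,t)+v(x,t)$. The quantity $M$ is natural here because, by the cooperativity \eqref{bc-const} (equivalently the $c_i>0$ part of \eqref{bc1}), the component $v$ (resp.\ $u$) acts favorably as a coefficient in the equation for $u$ (resp.\ $v$), and the only genuinely dissipative term that survives after scaling is the $-b_iu^2$, $-b_iv^2$; crucially $c_1c_2>b_1b_2$ is exactly the condition guaranteeing that blow-up is possible, and it will also be what makes the limiting Liouville theorem nontrivial. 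So I would introduce the function
$$
M_k(z) := \bigl(u(x,t)+v(x,t)\bigr)\quad\text{evaluated along suitable rescalings},
$$
and argue by contradiction: if \eqref{est4} fails, there are solutions $(u_k,v_k)$ on $\Omega_k\times(0,T_k)$, points $(x_k,t_k)$, and a doubling-type selection giving a sequence along which $u_k+v_k$ blows up faster than the right-hand side of \eqref{est4} allows.

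Next I would carry out the rescaling. Set $\lambda_k := (u_k+v_k)(x_k,t_k)^{-1/2}\to 0$, and define
$$
\tilde u_k(y,s) := \lambda_k^2\, u_k(x_k+\lambda_k y,\ t_k+\lambda_k^2 s),\qquad
\tilde v_k(y,s) := \lambda_k^2\, v_k(x_k+\lambda_k y,\ t_k+\lambda_k^2 s),
$$
so that $(\tilde u_k,\tilde v_k)$ solves a system of the same form with coefficients $\tilde a_i^k = \lambda_k^2 a_i(\cdots)\to 0$ uniformly (since $|a_i|\le M_0$), and $\tilde b_i^k,\tilde c_i^k$ which — by the equicontinuity built into the class $\mathcal F(\eps_0,M_0,\omega_0)$ — converge, along a subsequence, locally uniformly to \emph{constants} $b_i^\infty,c_i^\infty\in[\eps_0,M_0]$ satisfying $c_1^\infty c_2^\infty\ge b_1^\infty b_2^\infty+\eps_0>b_1^\infty b_2^\infty$. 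The doubling argument provides the uniform local bound $\tilde u_k+\tilde v_k\le 2$ (say) on expanding parabolic cylinders, together with $(\tilde u_k+\tilde v_k)(0,0)=1$. Parabolic interior estimates then give, along a further subsequence, local convergence in $C^{2,1}_{loc}$ to an entire nonnegative solution $(U,V)$ of the constant-coefficient system
$$
U_t-\Delta U = U(-b_1^\infty U + c_1^\infty V),\qquad
V_t-\Delta V = V(-b_2^\infty V + c_2^\infty U)
$$
on $\R^n\times\R$ (the boundary contributions disappear: either $\Omega=\R^n$ in the limit when $\mathrm{dist}(x_k,\partial\Omega)/\lambda_k\to\infty$, which is forced by the $C_2\,\mathrm{dist}^{-2}$ term and the doubling property, or — in the Dirichlet/Neumann case with $C_2=0$ — one gets a half-space problem, handled by the corresponding Liouville theorem stated later in the paper). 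Moreover $(U+V)(0,0)=1$, so $(U,V)\not\equiv(0,0)$.

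Now comes the heart of the matter, and the step I expect to be the main obstacle: invoking the Liouville-type theorem for this scaling-invariant system (the one the introduction promises, proved for systems of the form \eqref{sys-MSS}) to conclude $(U,V)\equiv 0$, contradicting $(U+V)(0,0)=1$. The mechanism, as flagged in the introduction, is that the two components of an entire solution must be \emph{proportional}, $V=\kappa U$ for a suitable constant $\kappa>0$ determined by $b_1^\infty,b_2^\infty,c_1^\infty,c_2^\infty$; substituting reduces the system to a single scalar equation $U_t-\Delta U = \beta U^2$ with $\beta=-b_1^\infty+c_1^\infty\kappa$, and the condition $c_1^\infty c_2^\infty>b_1^\infty b_2^\infty$ is precisely what forces $\beta>0$, so the scalar Fujita-type Liouville theorem (valid for $n\le 5$, which is where the dimension restriction enters and is optimal) applies and gives $U\equiv 0$. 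Establishing the proportionality is the delicate part: it requires an argument — presumably a Harnack inequality together with a comparison/energy argument on the difference $c_2^\infty U - $ (a multiple of) $V$, or the $\mathcal E$-functional the preamble defines — valid for \emph{all} entire solutions, not just bounded ones, and it must also accommodate the half-space Dirichlet and Neumann cases. Once the contradiction is reached, the universal estimate \eqref{est4} follows; the three variants (with $(C_1,C_2)=(1,1)$ in general, $(1,0)$ under boundary conditions on $C^2$ domains, and $(0,1)$ when $a_i\equiv0$ and $b_i,c_i$ are constant) correspond to which terms survive the scaling: $C_1$ tracks the influence of the zeroth-order coefficients $a_i$ and the time origin, $C_2$ tracks proximity to $\partial\Omega$ when no boundary condition is imposed, and when $a_i\equiv0$ with constant $b_i,c_i$ the problem is already exactly scaling-invariant so the crude $C_1$ term is unnecessary.
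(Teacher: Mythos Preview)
Your proposal is correct and takes essentially the same route as the paper: contradiction, the Doubling lemma of \cite{PQS1}, rescaling by $\lambda_k=(u_k+v_k)(x_k,t_k)^{-1/2}$, and convergence (via the equicontinuity in $\mathcal F$ and parabolic interior estimates) to a nontrivial entire or half-space solution of the constant-coefficient limit system, contradicting Theorem~\ref{thm-liouv1}. Your only hesitation concerns the proof of that Liouville theorem rather than of Theorem~\ref{thm-ub} itself; for the record, the paper first reduces to \emph{bounded} entire solutions by a second doubling, and then obtains the proportionality $u=Kv$ by applying a maximum-principle lemma (Proposition~\ref{propF}) to $w=u-Kv$, not via Harnack or an energy functional.
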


In particular,  Theorem~\ref{thm-ub} 
yields optimal upper blow-up rate estimates
of solutions which blow up at time $T$,
and also guarantees global existence and boundedness of
threshold solutions.

In order to prove the existence of periodic solutions
we fix $T\in(0,\infty)$ and
assume that
\begin{equation} \label{ass-per}
\left.\begin{aligned}
& \hbox{$\Omega\subset\R^n$ is a $C^3$-smooth bounded domain, $n\leq5$,} \\
& a_i,b_i,c_i\in C(\overline\Omega\times[0,\infty)) 
     \hbox{ are $T$-periodic in $t$ for } i=1,2, \\
& b_1,b_2,c_1,c_2>0, \quad c_1c_2>b_1b_2,
\quad a_1,a_2<\Lambda_1.  
\end{aligned}\quad\right\}
\end{equation}

\begin{theorem} \label{thm-per}
Assume \eqref{ass-per}.
Then system \eqref{lv11} complemented by the homogeneous Dirichlet boundary
conditions
possesses a positive $T$-periodic solution.
\end{theorem}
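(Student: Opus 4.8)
The plan is to obtain the periodic solution as a fixed point of the Poincar\'e (period) map, using a homotopy/degree argument whose compactness and a priori bounds are supplied by Theorem~\ref{thm-ub}. First I would set up the functional-analytic framework: work in the cone $K$ of nonnegative functions in $X := C_0(\overline\Omega)\times C_0(\overline\Omega)$ (or in a fractional power space $W$ adapted to the Dirichlet Laplacian, so as to have the needed smoothing), and for $(u_0,v_0)\in K$ let $\Phi(u_0,v_0) := (u(\cdot,T),v(\cdot,T))$ be the value at time $T$ of the (local) solution of \eqref{lv11} with Dirichlet conditions and initial data $(u_0,v_0)$. Since the coefficients are $T$-periodic in $t$, a fixed point of $\Phi$ in $K\setminus\{0\}$ is precisely a positive $T$-periodic solution. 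The map $\Phi$ is well defined and compact on the set where solutions exist up to time $T$; the point is to show the fixed point exists and is nontrivial.

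The key steps, in order: (1) \emph{A priori bounds.} Apply Theorem~\ref{thm-ub} (with $C_1=C_2=1$, $C_2$-term absent since $\Omega$ is bounded and we impose the Dirichlet condition, so estimate \eqref{est1} holds): any positive solution of \eqref{lv11} on $(0,T')$ satisfies $u+v\le C(1+t^{-1}+(T'-t)^{-1})$. In the periodic setting a periodic solution is defined for all $t\in\R$, hence both $t^{-1}$ and $(T'-t)^{-1}$ can be made arbitrarily small by shifting the time origin; thus every positive $T$-periodic solution is bounded by a constant depending only on the data, and parabolic regularity then gives uniform bounds in $W$. More generally, this shows solutions of any homotopy parameter do not blow up, so $\Phi$ (and its homotopy family) is globally defined on the relevant ball. (2) \emph{Excluding the trivial fixed point.} Under $a_1,a_2<\Lambda_1$ the zero solution is linearly (exponentially) asymptotically stable for the linearized period map: the linearization of $\Phi$ at $0$ is the period map of the decoupled heat semigroups $e^{T(\Delta+a_i)}$ whose spectral radius is $<1$. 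Hence $0$ is an isolated fixed point with local index $1$, and one must rule out that all fixed points coincide with $0$. (3) \emph{The homotopy.} Deform to a problem whose fixed-point set is understood. A natural choice: homotope the nonlinearity, e.g.\ replace $(a_1,a_2)$ by $(a_1+s,a_2+s)$ for $s\in[0,s_*]$ with $s_*$ large (pushing $a_i+s_*$ above $\Lambda_1$), keeping the competition/cooperation structure \eqref{bc-const} so that Theorem~\ref{thm-ub} still applies uniformly in $s$ — or instead shrink $b_i,c_i$ toward a scalar model where positive periodic solutions are known to exist. Fixed-point index theory on the cone $K$ (with the a priori bound from step~1 giving a uniform ball $B_R$ and the uniform estimates excluding fixed points on $\partial B_R$) then yields: $\mathrm{index}(\Phi, K\cap B_R) = \mathrm{index}(\Phi_{s_*}, K\cap B_R)$, while the homotopy-invariance plus a Krasnoselskii-type expansion/compression argument at large $s$ (using that for $a_i$ above $\Lambda_1$ the zero solution is unstable, so $\mathrm{index}(\Phi,0)=0$) forces a nontrivial fixed point for $s=0$. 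Concretely: $\mathrm{index}(\Phi,K\cap B_R)=1$ (large-ball computation via the a priori bound), $\mathrm{index}(\Phi,0)=1$ in the $a_i<\Lambda_1$ case, so one needs the homotopy to relate the problem to one where these indices disagree, producing a fixed point in $B_R\setminus\{0\}$. (4) \emph{Positivity and smoothness.} A nontrivial nonnegative fixed point is, by the strong maximum principle applied to the cooperative system, strictly positive in $\Omega\times\R$; bootstrapping with Schauder estimates (using the $C^3$ regularity of $\partial\Omega$ and continuity of the coefficients) gives a classical positive $T$-periodic solution.

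The main obstacle I expect is step~(3): constructing a homotopy that (a) preserves the uniform a priori bound of Theorem~\ref{thm-ub} — which requires staying inside the coefficient class \eqref{bc1}, in particular keeping $c_1c_2-b_1b_2$ bounded below and the coefficients equicontinuous and bounded along the whole deformation — and simultaneously (b) connects \eqref{lv11} to a model problem whose period map has a computable, nontrivial fixed-point index on the cone. The introduction explicitly flags that "the homotopy used in our proof is quite different" from the scalar-case homotopies of \cite{Est} and the steady-state homotopy of \cite{Lou}, which signals that the obvious deformations fail and a genuinely new choice (likely decoupling the system, or interpolating between the full system and a single scalar Fujita-type equation \eqref{Fuj1} whose periodic solutions are known from \cite{QNoDEA,BPQ}) is needed. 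A secondary technical point is verifying compactness and continuity of the period map on the cone when $T$ is fixed but solutions are only known a priori to be local — this is handled by the no-blow-up consequence of Theorem~\ref{thm-ub}, but the dependence of the existence time on the data must be tracked carefully so that the index is defined on a fixed neighborhood throughout the homotopy.
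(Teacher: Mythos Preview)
Your overall strategy (a priori bounds plus degree theory plus homotopy) matches the paper's, but there are two genuine gaps where your outline diverges from what actually works.

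First, the framework. The paper does \emph{not} use the Poincar\'e map on initial data. It works in ${\cal X}=BUC(\Omega\times(0,T))$ and defines a compact operator ${\cal K}:f\mapsto w^+$, where $w$ solves the linear $T$-periodic problem $w_t-\Delta w=f$, $w=0$ on $\partial\Omega$, $w(\cdot,0)=w(\cdot,T)$ (Proposition~\ref{propBH}). Fixed points of ${\cal T}(u,v)=({\cal K}(u(a_1-b_1u+c_1v)),{\cal K}(v(a_2-b_2v+c_2u)))$ are then the desired periodic solutions. This sidesteps your ``secondary technical point'' entirely: ${\cal K}$ is globally defined and compact, whereas the Poincar\'e map is undefined on data that blow up before time $T$, and Theorem~\ref{thm-ub} does \emph{not} prevent blow-up for arbitrary data---it only gives a blow-up rate.

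Second, and this is the heart of the matter, the homotopy. Your candidates (shift $a_i$ upward, or deform toward a scalar model) stay inside the Lotka--Volterra class, so Theorem~\ref{thm-ub} bounds periodic solutions uniformly along the deformation---but then the degree on the large ball is constant, and you never produce the mismatch you need. (Incidentally, an a priori bound alone does not give ``degree $=1$ on $B_R$''; it only makes the degree well-defined.) The paper's homotopy deliberately \emph{leaves} the Lotka--Volterra class: for $\lambda\in[0,1]$ it interpolates between \eqref{lv11} and
\[
u_t-\Delta u=\Lambda u+K^3v^2,\qquad v_t-\Delta v=\Lambda v+u^2,
\]
with $\Lambda=\Lambda_1^T+1$ and $K=(c_1+b_2)/(c_2+b_1)$. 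At $\lambda=0$ this system has \emph{no} positive periodic solution (multiply by the adjoint periodic eigenfunction of Proposition~\ref{propD} and integrate), so the degree on a large ball is $0$. Combined with degree $1$ on a small ball (obtained via the homotopy ${\cal T}_\lambda=({\cal K}(\lambda u(\cdots)),{\cal K}(\lambda v(\cdots)))$ and the stability of zero when $a_i<\Lambda_1$), this yields the nontrivial fixed point. The price is that the intermediate systems \eqref{sys-hom} are not covered by Theorem~\ref{thm-liouv1} or Theorem~\ref{thm-ub}; the paper proves a separate Liouville theorem (Theorem~\ref{thm-liouv-per}) by checking that $w=u-Kv$ still satisfies $(w_t-\Delta w)\,\hbox{sign}(w)\le -\tilde C|w|^2$ for every $\lambda$, and then deduces the needed uniform bound (Theorem~\ref{thm-ub-per}). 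This is exactly the ``quite different'' homotopy flagged in the introduction, and your proposal does not supply it.
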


Similarly as in \cite{PQS2},
our estimates are based on doubling arguments, scaling
and Liouville-type theorems for corresponding 
scaling invariant problems.
In the case of system \eqref{lv11}
the scaling invariant problem is 
\begin{equation} \label{lv2}
\left.\begin{aligned}
u_t-\Delta u &= u(-b_1u+c_1v) \\
v_t-\Delta v &= v(-b_2v+c_2u) 
\end{aligned}\quad\right\}\qquad\hbox{in }X\times\R,
\end{equation}
where 
\begin{equation} \label{X}
\left.\begin{aligned}
\hbox{either }\  X&=\R^n \\
\hbox{or }\ X&=\R^n_+:=\{x=(x_1,x_2,\dots,x_n):x_1>0\},
\end{aligned}\quad\right\}
\end{equation}
$b_1,b_2,c_1,c_2$ are constants satisfying \eqref{bc-const},  
and  system \eqref{lv2} is complemented by the homogeneouos
Dirichlet or Neumann boundary conditions if $X=\R^n_+$.
In fact, we will 
assume \eqref{X} and consider more general systems of the form
\begin{equation} \label{sys-MSS}
\left.\begin{aligned}
u_t-\Delta u &= u^r(-b_1u^q+c_1v^q) \\
v_t-\Delta v &= v^r(-b_2v^q+c_2u^q), 
\end{aligned}\quad\right\}
\qquad \hbox{in }\ X\times\R,
\end{equation} 
complemented by the Dirichlet or Neumann boundary conditions if $X=\R^n_+$.
Here $b_1,b_2,c_1,c_2$ are constants satisfying \eqref{bc-const},
$q\geq r>0$, $q+r>1$ and
\begin{equation} \label{qr}
n\leq2\quad\hbox{or}\quad q+r<\frac{n(n+2)}{(n-1)^2}.
\end{equation}

\begin{theorem} \label{thm-liouv1}
Assume \eqref{X}.
Let $b_1,b_2,c_1,c_2$ be constants satisfying  \eqref{bc-const}
and $q,r$ satisfy $q\geq r>0$, $q+r>1$ and \eqref{qr}. 
Let $(u,v)$ be a nonnegative solution of \eqref{sys-MSS}
complemented by the homogeneouos
Dirichlet or Neumann boundary conditions if $X=\R^n_+$.
In the case of the Dirichlet boundary condition 
assume also that $(u,v)$ is bounded.
Then $u\equiv v\equiv0$.
\end{theorem}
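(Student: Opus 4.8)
The plan is to exploit the cooperative (quasimonotone) structure of \eqref{sys-MSS} to show that any entire nonnegative solution has proportional components, and then to invoke a known scalar parabolic Liouville theorem. First I would treat the linear-in-each-equation part as a coupling and look for a constant $\kappa>0$ such that $w:=u-\kappa v$ satisfies a differential inequality with no zeroth-order source of the wrong sign. Concretely, subtracting $\kappa$ times the second equation from the first gives
\begin{equation*}
w_t-\Delta w = u^r(-b_1u^q+c_1v^q) - \kappa v^r(-b_2v^q+c_2u^q).
\end{equation*}
The aim is to choose $\kappa$ (depending on $b_i,c_i,q,r$) so that the right-hand side, viewed along the hyperplane $\{w=0\}$, i.e. $u=\kappa v$, vanishes or has a favorable sign; since $c_1c_2>b_1b_2$, the relevant algebraic condition should be solvable. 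One then wants a one-sided comparison: $w\ge 0$ everywhere, and symmetrically $w\le 0$ everywhere, forcing $u\equiv\kappa v$. To get both inequalities one uses the maximum principle on $X\times\mathbb R$ together with the boundedness/growth control coming from \eqref{qr}, which is precisely the range in which entire solutions of the scalar problem are known to be trivial and in which the doubling-rescaling machinery produces bounded rescaled limits; in the Dirichlet case boundedness of $(u,v)$ is assumed outright, and in the Neumann case one argues as for the scalar equation that the solution is automatically bounded (or one works with the rescaled limit, which is).

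Once $u=\kappa v$ is established, substitute back: the first equation becomes
\begin{equation*}
v_t-\Delta v = \kappa^{r-1}\,(-b_1\kappa^q + c_1)\, v^{q+r},
\end{equation*}
and consistency with the second equation, $v_t-\Delta v = (-b_2+c_2\kappa^q)\,v^{q+r}$, pins down $\kappa$ as the positive root of $c_2\kappa^{q+2}... $ — more precisely of the two-term compatibility relation, which the hypothesis $c_1c_2>b_1b_2$ guarantees to have a positive solution with the coefficient $\lambda:=\kappa^{r-1}(c_1-b_1\kappa^q)=(c_2\kappa^q-b_2)>0$ of the right sign (this positivity is exactly where $c_1c_2>b_1b_2$ is used). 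So $v$ solves the scalar equation $v_t-\Delta v=\lambda v^{q+r}$ in $X\times\mathbb R$ with $\lambda>0$, under the Dirichlet or Neumann condition on $\partial X$ if $X=\mathbb R^n_+$, and $v$ is nonnegative (and bounded, in the Dirichlet case). The exponent $p:=q+r$ satisfies $p>1$ and, by \eqref{qr}, $p<p_{sg}(n)=n(n+2)/(n-1)^2$ (or $n\le 2$), which is the Liouville range for the scalar superlinear heat equation on $\mathbb R^n$, on $\mathbb R^n_+$ with Dirichlet data, and on $\mathbb R^n_+$ with Neumann data (reflect to reduce to the whole space). Hence $v\equiv 0$, and therefore $u\equiv 0$ as well.

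The main obstacle I anticipate is making the proportionality step fully rigorous as a global statement rather than a formal one: establishing $u-\kappa v\ge 0$ on all of $X\times\mathbb R$ requires a maximum/comparison principle on an unbounded space-time domain with no initial data, so one needs a growth restriction on $(u,v)$ to rule out nonzero harmonic-type comparison functions. This is where \eqref{qr} and the boundedness hypothesis (Dirichlet case) or an a~priori bound derived from the equation (Neumann case) enter, presumably via the known universal bounds for the scalar equation applied after the coupling is partially decoupled, or via a Phragmén–Lindelöf argument exploiting the sign of the nonlinear terms. A secondary technical point is the boundary condition when $X=\mathbb R^n_+$: for Neumann one reflects evenly and the reflected $v$ solves the same equation on $\mathbb R^n$, while for Dirichlet one must use the half-space scalar Liouville theorem directly, both of which are available in the cited literature (\cite{PQS2} and references therein). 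The algebra determining $\kappa$ and $\lambda$ is routine once the structural idea is in place.
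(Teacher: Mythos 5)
Your overall architecture is exactly the paper's: reduce the Neumann half-space case by even reflection, reduce to bounded solutions by doubling--rescaling, show the components are proportional, and finish with the scalar Liouville theorem for $u_t-\Delta u=cu^{q+r}$ in the range \eqref{qr}. The algebra pinning down $\kappa$ and the positivity of the resulting coefficient from $c_1c_2>b_1b_2$ is also correct (for $r=1$ it gives $K=[(c_1+b_2)/(c_2+b_1)]^{1/q}$, as in Remark~\ref{remKC}).

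The genuine gap is the step you yourself flag as the main obstacle: concluding $w:=u-\kappa v\equiv0$ from the differential inequality on all of $X\times\R$. You propose to get $w\ge0$ and $w\le0$ separately ``by the maximum principle'' with growth control coming from \eqref{qr} or a Phragm\'en--Lindel\"of argument, but this is not how the step works and, as described, it does not close: a bounded entire solution of a parabolic inequality with no initial or terminal data is not forced to have a sign by any growth restriction (condition \eqref{qr} in fact plays no role whatsoever in the proportionality step; it is used only for the final scalar Liouville theorem). The ingredient you are missing is quantitative: one needs the precise structural inequality $(w_t-\Delta w)\,\mathrm{sign}(w)\leq -C(u+\kappa v)^{q+r-1}|w|\leq -C|w|^{q+r}$, which is the content of \cite[Lemma 7.1(i)]{MSS} and uses $q\geq r>0$ and \eqref{bc-const}; the point is the strict superlinear \emph{absorption} of $|w|$, not merely a favorable sign on $\{w=0\}$. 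Given that inequality, the conclusion $w\equiv0$ for bounded $w$ follows from a penalization argument (Proposition~\ref{propF}, after F\"oldes): subtract $\eps|x-x^*|^2+\eps(\sqrt{(t-t^*)^2+1}-1)$ so that a maximum is attained, and at that point the inequality forces $0\leq -\inf_{[W/2,W]}h+C\eps$, a contradiction for small $\eps$. Without this (or an equivalent) lemma your proportionality claim remains formal, and it is the heart of the proof.
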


In the proof of Theorem~\ref{thm-liouv1} 
we first show that there exists $K>0$ such that any nonnegative 
(or nonnegative bounded) solution satisfies $u=Kv$. 
This implies that $u$ is a solution of the scalar equation
\begin{equation} \label{FujX}
  u_t-\Delta u=cu^{q+r} \quad\hbox{in }X\times\R 
\end{equation}
with some $c>0$
(and $u$ satisfies the corresponding boundary condition if $X=\R^n_+$).
Now the Liouville theorems in \cite{BV,PQS2,Qpr} guarantee $u\equiv0$
if \eqref{qr} is true 
(or $q+r<(n+2)/(n-2)$ if we consider radial solutions only).
Let us also mention that
\eqref{FujX} possesses positive radial solutions
if $X=\R^n$ and $q+r\geq(n+2)/(n-2)$.

If $r=q=1$ (which corresponds to the 
Lotka-Volterra system \eqref{lv2})
then  \eqref{qr} can be written in the form
$n\leq5$ and condition $q+r\geq(n+2)/(n-2)$
is equivalent to $n\geq6$, hence
our Liouville theorems are optimal in this case.

Similarly, if $r=1$, $q=2$ 
then we obtain the nonexistence for \eqref{sys-MSS}
under the optimal condition $n\leq3$
and --- in the same way as in the case of $r=q=1$ ---
one can use this result to obtain
universal estimates for solutions of systems of the form
 \begin{equation} \label{Schr1}
\left.\begin{aligned}
u_t-\Delta u &= a_1u-b_1u^3+c_1uv^2, \\
v_t-\Delta v &= a_2v-b_2v^3+c_2u^2v, 
\end{aligned}\quad\right\}
\qquad x\in \Omega, \ t\in(0,T),
\end{equation}
complemented by the Dirichlet or Neumann boundary conditions. 
Notice that steady states of \eqref{Schr1}
correspond to the standing wave solutions of 
related Schr\"odinger systems,
and estimates of global solutions of \eqref{Schr1}
can be useful in the study of the steady states, see \cite{WW}.
Let us also mention that
the nonexistence of positive solutions of \eqref{sys-MSS} with $r=1$, $q=2$,
$b_1=b_2=-1$ and $c_1=c_2>-1$ has recently
been proved in \cite{Qpr}
for $n\leq2$ (or $n\leq3$ in the class of
radially symmetric solutions): The proof heavily used the gradient
structure of the system.
That result indicates that condition \eqref{bc-const}
is not necessary for the validity of Liouville theorems.
Therefore
in Section~\ref{sec-lt} we also briefly discuss
Liouville-type results 
for the Lotka-Volterra system \eqref{lv1} with coefficients
$b_1,b_2$ not necessarily positive and $d_1\ne d_2$,
see Theorem~\ref{thm-liouv2} and Remark~\ref{rem2}.

%----------------------------------------
\section{Liouville theorems} \label{sec-lt}

We will use the following modification of \cite[Lemma 2.1]{Fold}.

\begin{proposition} \label{propF}
Assume \eqref{X}. 
Let $h\in C([0,\infty))$, $h(s)>0$ for $s>0$.
Let $w\in C^{2,1}(X\times\R)\cap C(\overline X\times\R)$ be bounded
and satisfy the inequality 
\begin{equation} \label{eq-w}
 (w_t-\Delta w)\,\hbox{\rm sign}(w)\leq -h(|w|) \qquad\hbox{in }X\times\R,
\end{equation}
and the boundary condition $w=0$ on $\partial\R^n_+\times\R$ if $X=\R^n_+$.  
Then $w\equiv0$.
\end{proposition}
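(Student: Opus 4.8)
The plan is to derive a pointwise bound on $\sup|w|$ that forces it to be $0$, using a comparison argument in time. First I would set $m:=\sup_{X\times\R}|w|$, which is finite since $w$ is bounded; the goal is to show $m=0$. Suppose for contradiction $m>0$. Since $h$ is continuous and positive on $(0,\infty)$, fix $\delta\in(0,m)$ and let $\mu:=\min_{[\delta,m]}h>0$. The idea is that at points where $|w|$ is close to its supremum $m$, the inequality \eqref{eq-w} says $(w_t-\Delta w)\,\hbox{\rm sign}(w)\leq-\mu$, so $|w|$ is being pushed down at a definite rate, which over a long time interval is incompatible with staying near $m$.

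To make this rigorous I would argue by sub/supersolution comparison on a large parabolic cylinder, or more cleanly by a barrier in the time variable alone. Consider the function $W(x,t):=|w(x,t)|$; away from its zero set $W$ is a subsolution of $W_t-\Delta W\leq -h(W)$ in the viscosity (or distributional) sense, and on $\partial\R^n_+$ in the Dirichlet case $W=0$. Now compare $W$ on a cylinder $Q=B_R\times(t_0-\tau,t_0)$ with the spatially constant supersolution $\Phi(t)$ solving the ODE $\Phi'=-h(\Phi)$, $\Phi(t_0-\tau)=m$ — note $\Phi$ decreases and, because $h\geq\mu$ on $[\delta,m]$, we have $\Phi(t_0)\leq m-\mu\tau$ as long as $\Phi(t_0)\geq\delta$; choosing $\tau$ with $\mu\tau=m-\delta$ gives $\Phi(t_0)\leq\delta$. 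On the lateral boundary $\partial B_R\times(t_0-\tau,t_0)$ we only have $W\leq m\leq\Phi(t_0-\tau)$, not $W\leq\Phi(t)$, so the naive comparison fails near $\partial B_R$; I would fix this by adding to $\Phi$ a small correction of the form $\eta\psi_R(x)$ where $\psi_R$ is a standard cutoff-type supersolution of the heat operator on $B_R$ that equals a large constant on $\partial B_R$ and is $O(R^{-2})$-small in the interior — this is exactly the kind of localization used in \cite{Fold}. Letting $R\to\infty$ kills the correction term, and we conclude $W(x,t_0)\leq\delta$ for \emph{every} $(x,t_0)$, i.e. $m\leq\delta$, contradicting $\delta<m$. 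Hence $m=0$.

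The main obstacle is handling the lateral boundary of the cylinder when $X$ is unbounded (and, in the half-space Dirichlet case, also the flat part of $\partial X$, though there $W=0$ helps rather than hurts). The point is that one cannot compare with a purely time-dependent barrier on an unbounded domain directly; one needs a spatial cutoff that is a supersolution and whose interior contribution vanishes as the cutoff scale diverges. Since $h$ need not be, say, Lipschitz or convex, I would avoid any argument requiring regularity of solutions of $\Phi'=-h(\Phi)$ beyond existence up to the time it reaches level $\delta$ — existence of a local solution from $\Phi(t_0-\tau)=m>0$ is guaranteed by Peano, and monotonicity keeps it in $[\delta,m]$ on $[t_0-\tau,t_0]$ with the chosen $\tau$, which is all that is used. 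The remaining steps — the comparison principle on $Q$ and the passage $R\to\infty$ — are routine given \eqref{eq-w} and the boundedness of $w$.
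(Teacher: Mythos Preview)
Your strategy is genuinely different from the paper's. The paper does not build a barrier at all: it perturbs $w$ by $-\eps|x-x^*|^2-\eps\bigl(\sqrt{(t-t^*)^2+1}-1\bigr)$ so that the perturbed function attains a global interior maximum, then reads off \eqref{eq-w} at that point. Since the perturbation contributes only $O(\eps)$ to $w_t-\Delta w$ while $h(w)\ge\inf_{[W/2,W]}h>0$ at the maximum, one gets a contradiction for small $\eps$. This ``approximate maximum point'' device is a few lines and needs only continuity and positivity of $h$.

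Your comparison route can be made to work, but as written it has a real gap. You ask for $\psi_R$ that is a supersolution of the heat operator (hence superharmonic, if time-independent), large on $\partial B_R$, and $O(R^{-2})$-small in the interior; by the minimum principle no superharmonic function can be large on the boundary and small inside, so such a $\psi_R$ does not exist. More to the point, for the corrected barrier $\Phi+\eta\psi_R$ to be a supersolution of $V_t-\Delta V=-h(V)$ you would need $-\eta\,\Delta\psi_R\ge h(\Phi)-h(\Phi+\eta\psi_R)$, and with $h$ merely continuous (no monotonicity, no Lipschitz bound) this is not available --- so the step you call ``routine'' is precisely where the difficulty sits. The clean fix is to drop the ODE barrier and use the linear one
\[
V_R(x,t)=m+\frac{m|x-x_0|^2}{R^2}-(\mu-\eps)(t-t_0+\tau).
\]
On the parabolic boundary of $B_R(x_0)\times(t_0-\tau,t_0]$ one has $V_R\ge m\ge W$, and if $W-V_R$ had a positive interior maximum at $(x_1,t_1)$ then $W(x_1,t_1)>V_R(x_1,t_1)>\delta$ (choose $(\mu-\eps)\tau<m-\delta$), so $|w|$ is smooth there with $W_t-\Delta W\le-\mu$, while the maximum conditions force $W_t-\Delta W\ge (V_R)_t-\Delta V_R=-(\mu-\eps)-2nm/R^2>-\mu$ for $R$ large. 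Hence $W(x_0,t_0)\le m-(\mu-\eps)\tau<m$ for every $(x_0,t_0)$, contradicting the definition of $m$. Compared with the paper's argument this is longer but equally elementary; the paper's perturbation trick simply sidesteps the semilinear comparison issue altogether.
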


\begin{proof}
Assume on the contrary $w\not\equiv0$. Since $-w$ also satisfies the assumptions
of Proposition~\ref{propF}, we may assume 
$$W:=\sup_{X\times\R}w >0.$$
Fix $(x^*,t^*)\in X\times\R$ such that $w(x^*,t^*)\geq W/2$.
For each $\eps>0$ set
$$ w_\eps(x,t):=w(x,t)-\eps|x-x^*|^2
  -\eps\bigl(\sqrt{(t-t^*)^2+1}-1\bigr),\qquad x\in X,\ t\in\R.$$
Since $w_\eps(x^*,t^*)=w(x^*,t^*)>0$,
$w_\eps(x,t)\to-\infty$ as $|x|+|t|\to\infty$,
$w_\eps(x,t)<0$ if $X=\R^n_+$ and $x\in\partial\R^n_+$,
there exists 
$(x_\eps,t_\eps)\in X\times\R$ satisfying 
$w_\eps(x_\eps,t_\eps)=\sup_{X\times\R}w_\eps$.
Notice that
$$ W\geq w(x_\eps,t_\eps)\geq w_\eps(x_\eps,t_\eps)
  \geq w_\eps(x^*,t^*)=w(x^*,t^*)\geq \frac{W}2>0, $$
and
$$ (w_\eps)_t(x_\eps,t_\eps)=0, \qquad \Delta w_\eps(x_\eps,t_\eps)\leq0.$$
Consequently,
$$ \begin{aligned}
  0 &\leq (w_\eps)_t(x_\eps,t_\eps)-\Delta w_\eps(x_\eps,t_\eps) \\
    &= w_t(x_\eps,t_\eps)-\Delta w(x_\eps,t_\eps)
       -\eps\frac{t_\eps-t^*}{\sqrt{(t_\eps-t^*)^2+1}}+2\eps n \\
    &\leq -h(w(x_\eps,t_\eps))+\eps+2\eps n \\
    &\leq -\inf_{W\geq s\geq W/2} h(s)+\eps+2\eps n.
\end{aligned} $$
Since the first term on the right hand side is negative and independent of $\eps$,
we arrive at a contradiction if $\eps$ is small enough.
\end{proof}

Now we are ready to prove Liouville-type theorems for system \eqref{sys-MSS}.

\begin{proof}[Proof of Theorem~\ref{thm-liouv1}]
If $X=\R^n_+$ and
$(u,v)$ is a nonnegative solution of the Neumann problem
then extending $(u,v)$ by 
$$\left.\begin{aligned}
u((-x_1,x_2,\dots,x_n),t)&:=u((x_1,x_2,\dots,x_n),t), \\
v((-x_1,x_2,\dots,x_n),t)&:=v((x_1,x_2,\dots,x_n),t),
\end{aligned}\ \right\}
\quad x\in\R^n_+,\ t\in\R,
$$
we obtain a nonnegative solution of \eqref{sys-MSS} with $X=\R^n$.
Consequently, it is sufficient to consider the Dirichlet problem
and the case $X=\R^n$.

If $X=\R^n$
and $(u,v)$ is a nonnegative solution of \eqref{sys-MSS} 
which is not identically zero then by doubling and scaling
arguments we may assume that $(u,v)$ is bounded. 
In fact, assume that $(u+v)(x_k,t_k)\to\infty$
for some $(x_k,t_k)\in\R^n\times\R$.
Set  $M:=(u+v)^{(q+r-1)/2}$.
Then the Doubling lemma \cite[Lemma 5.1]{PQS1}
guarantees the existence of $(\tilde x_k,\tilde t_k)$
such that
$M(\tilde x_k,\tilde t_k)\geq M(x_k,t_k)\to \infty$
and 
$M(x,t)\leq 2M(\tilde x_k,\tilde t_k)$
for all $(x,t)$ satisfying 
$|x-\tilde x_k|+(t-\tilde t_k)^{1/2}\leq k\lambda_k$,
where $\lambda_k:=1/M(\tilde x_k,\tilde t_k)$.
It is easily seen that the rescaled functions
$$ \begin{aligned}
\tilde u(y,s)&:=\lambda_k^{2/(q+r-1)}u(\tilde x_k+\lambda_ky,\tilde t_k+\lambda_k^2s),\\
\tilde v(y,s)&:=\lambda_k^{2/(q+r-1)}v(\tilde x_k+\lambda_ky,\tilde t_k+\lambda_k^2s)
\end{aligned}$$
converge locally uniformly to a nonnegative nontrivial bounded solution
of \eqref{sys-MSS}.

Hence, we may assume that
$(u,v)$ is a nonnegative bounded solution of \eqref{sys-MSS}
with $X=\R^n$ or a nonnegative bounded solution of
the Dirichlet problem.
Now \cite[Lemma 7.1(i)]{MSS}
guarantees the existence of $K,C>0$ such that
the function $w:=u-Kv$ satisfies
$$(w_t-\Delta w)\,\hbox{sign}(w)\leq-C(u+Kv)^{q+r-1}|w|\leq-C|w|^{q+r},$$
hence Proposition~\ref{propF} yields $u=Kv$.
Our assumption $c_1c_2>b_1b_2$ guarantees that $u$ 
solves the scalar equation
$$ u_t-\Delta u = cu^{r+q} $$
with some $c>0$
(and satisfies the Dirichlet boundary condition if $X=\R^n_+$).
Consequently, it is sufficient to use
the Liouville theorems in \cite{BV,PQS2,Qpr}.
\end{proof}

\begin{remark} \label{remKC} \rm
Assume $r=1$.
Then the constant $K$ in the proof of Theorem~\ref{thm-liouv1}
can be computed explicitly: $K=[(c_1+b_2)/(c_2+b_1)]^{1/q}$
(see \cite{MSS}). 
Notice also that 
if $r=q=1$ and $w=u-Kv$ then $w_t-\Delta w=-(b_1u+b_2v)w$.
\end{remark}

In the proof of the existence of periodic solutions we will also need
estimates based on
the following Liouville theorem.

\begin{theorem} \label{thm-liouv-per}
Assume \eqref{X} and $n\leq5$.
Let $b_1,b_2,c_1,c_2$ be constants satisfying  \eqref{bc-const},
$\lambda\in[0,1]$ and $K:=(c_1+b_2)/(c_2+b_1)$.
Let $(u,v)$ be a nonnegative solution of the system
\begin{equation} \label{lv3}
\left.\begin{aligned}
u_t-\Delta u &= \lambda u(-b_1u+c_1v)+(1-\lambda)K^3v^2 \\
v_t-\Delta v &= \lambda v(-b_2v+c_2u)+(1-\lambda)u^2 
\end{aligned}\quad\right\}\qquad\hbox{in }X\times\R,
\end{equation}
complemented by the homogeneous
Dirichlet or Neumann boundary conditions if $X=\R^n_+$.
In the case of the Dirichlet boundary condition 
assume also that $(u,v)$ is bounded.
Then $u\equiv v\equiv0$.
\end{theorem}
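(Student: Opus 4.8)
The plan is to mimic the proof of Theorem~\ref{thm-liouv1}: reduce the system \eqref{lv3} to a scalar equation by showing the components are proportional, then invoke the known scalar Liouville theorems. First I would handle the boundary via the reflection trick exactly as before: a nonnegative solution of the Neumann problem on $\R^n_+$ extends evenly to a nonnegative solution on $\R^n$, so it suffices to treat $X=\R^n$ and the (bounded) Dirichlet case. In the case $X=\R^n$ I would then want to reduce to bounded solutions by the Doubling lemma of \cite{PQS1}; here one has to check that system \eqref{lv3} is invariant under the \emph{same} parabolic scaling $\tilde u(y,s)=\lambda_k^{2/(q+r-1)}u(\cdots)$ with $q+r=3$, i.e. $\tilde u(y,s)=\lambda_k u(\tilde x_k+\lambda_k y,\tilde t_k+\lambda_k^2 s)$. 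Indeed every term on the right-hand side of \eqref{lv3} is quadratic in $(u,v)$ (the terms $\lambda u(-b_1u+c_1v)$, $\lambda v(-b_2v+c_2u)$, $(1-\lambda)K^3v^2$, $(1-\lambda)u^2$ are all homogeneous of degree~$2$), so the rescaled functions solve the same system with the same $\lambda$, and the blow-up argument produces a nontrivial bounded solution. Hence we may assume $(u,v)$ nonnegative and bounded.

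The heart of the matter is to produce $K'>0$ such that $w:=u-K'v$ satisfies a differential inequality of the form $(w_t-\Delta w)\,\mathrm{sign}(w)\le -C|w|^{r+q}$ so that Proposition~\ref{propF} applies. The natural guess, motivated by Remark~\ref{remKC} and the way the zero-order terms in \eqref{lv3} were chosen, is $K'=K=(c_1+b_2)/(c_2+b_1)$, which is precisely the proportionality constant for the pure Lotka-Volterra part ($\lambda=1$) when $r=q=1$, and one checks that the coefficients $K^3$ and $1$ in front of the $(1-\lambda)$-terms are exactly what makes $u=Kv$ consistent with the limiting system at $\lambda=0$ as well (if $u=Kv$ then $u^2=K^2v^2$ and $K^3v^2=K\cdot K^2v^2=Ku^2$). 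So I would compute $w_t-\Delta w$ for $w=u-Kv$ directly. For the $\lambda$-part, Lemma~7.1(i) of \cite{MSS} (or the explicit computation in Remark~\ref{remKC}: $w_t-\Delta w=-\lambda(b_1u+b_2v)w$ for the Lotka-Volterra terms with $r=q=1$) gives a term $\le -\lambda\,C_1(u+Kv)\,|w|$ after taking signs. For the $(1-\lambda)$-part, $(1-\lambda)(K^3v^2-u^2)=-(1-\lambda)(u-Kv)(u+Kv)\cdot$(a positive factor)$\,=-(1-\lambda)(u+Kv)\,K\,w$ type expression — concretely $K^3v^2-u^2=-(u-Kv)(u+Kv)\cdot\frac{1}{1}$ needs care with the $K^3$, but $K^3v^2-u^2=K\!\left(K^2v^2-\tfrac1K u^2\right)$ is not a clean factorization; instead I'd write it after multiplying the second equation by $K$ and subtracting, choosing the combination so the quadratic forms cancel. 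In any case, combining the two contributions yields $(w_t-\Delta w)\,\mathrm{sign}(w)\le -C(u+Kv)|w|$ with $C>0$ independent of $\lambda\in[0,1]$, and since $(u+Kv)\ge c\,(u+v)\ge c'\,|w|$ one gets $\le -C'|w|^{2}=-C'|w|^{r+q}$ with $r=q=1$. Then Proposition~\ref{propF} with $h(s)=C's^2$ forces $w\equiv0$, i.e. $u\equiv Kv$.

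Once $u=Kv$ is known, substituting into the first equation of \eqref{lv3} gives, for $u$ alone, $u_t-\Delta u=\lambda u(-b_1u+\tfrac{c_1}{K}u)+(1-\lambda)\tfrac{K^3}{K^2}u^2=\bigl[\lambda(\tfrac{c_1}{K}-b_1)+(1-\lambda)K\bigr]u^2=:c\,u^2$, and one must check $c>0$: for $\lambda=1$ this is $(c_1-Kb_1)/K$, positive since $c_1c_2>b_1b_2$ implies $c_1/b_1>b_2/c_2$ hence $c_1>b_1K=b_1(c_1+b_2)/(c_2+b_1)$ (equivalently $c_1c_2>b_1b_2$); for $\lambda=0$ it is $K>0$; and $c$ depends affinely (hence continuously and positively) on $\lambda\in[0,1]$, so $c>0$ throughout. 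Thus $u$ solves the scalar Fujita-type equation \eqref{FujX} with $q+r=2$ on $X\times\R$ (with the Dirichlet condition if $X=\R^n_+$), and since $n\le5<6$ we have $2<(n+2)/(n-2)$ is false in general but the relevant subcritical condition is $q+r<n(n+2)/(n-1)^2$, which for $q+r=2$ reads $2(n-1)^2<n(n+2)$, i.e. $n^2-8n+2<0$, true for $n\le5$; so the Liouville theorems in \cite{BV,PQS2,Qpr} give $u\equiv0$, hence $v\equiv0$. The main obstacle I anticipate is verifying the sign inequality for $w=u-Kv$ uniformly in $\lambda$ — i.e. confirming that the specific constants $K^3$ and $1$ in \eqref{lv3} are exactly the ones that make the same $K$ work for every $\lambda$, and that the resulting inequality genuinely has the form \eqref{eq-w} required by Proposition~\ref{propF}; everything else is a routine adaptation of the proof of Theorem~\ref{thm-liouv1}.
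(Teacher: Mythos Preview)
Your approach is exactly the paper's: reduce to bounded solutions via reflection/doubling, set $w=u-Kv$, derive a differential inequality, apply Proposition~\ref{propF}, then invoke the scalar Liouville theorems. The factorization you flag as the ``main obstacle'' is in fact clean once you subtract $K$ times the second equation from the first: the $(1-\lambda)$-contribution to $w_t-\Delta w$ is $(1-\lambda)(K^3v^2-Ku^2)=(1-\lambda)K(Kv-u)(Kv+u)=-(1-\lambda)K(u+Kv)w$, so together with the $\lambda$-part (Remark~\ref{remKC}) one gets the exact identity
\[
w_t-\Delta w=-\bigl[\lambda(b_1u+b_2v)+(1-\lambda)K(u+Kv)\bigr]w,
\]
whence $(w_t-\Delta w)\,\mathrm{sign}(w)\le -\tilde C|w|^2$ uniformly in $\lambda\in[0,1]$. (Minor arithmetic slip: your subcriticality check should read $n^2-6n+2<0$, not $n^2-8n+2<0$; either inequality holds for $n\le5$.)
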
 

\begin{proof}
The proof is almost the same as the proof of Theorem~\ref{thm-liouv1}
with $q=r=1$. Due to Remark~\ref{remKC}, the function $w:=u-Kv$ satisfies
$$ \begin{aligned}
  w_t-\Delta w &=-\lambda(b_1u+b_2v)w+(1-\lambda)K(K^2v^2-u^2) \\
               &= -\bigl(\lambda(b_1u+b_2v)+(1-\lambda)K(u+Kv)\bigr)w
\end{aligned} $$
hence 
$$(w_t-\Delta w)\,\hbox{sign}(w)\leq-\tilde C|w|^2,$$
and $u=Kv$ due to Proposition~\ref{propF}.
Consequently, $u$ solves the scalar equation
$$ u_t-\Delta u = (\lambda c+(1-\lambda)K)u^2, $$
where $c=c_1/K-b_1>0$.
The scalar Liouville theorems in \cite{BV,PQS2} guarantee $u\equiv0$. 
\end{proof}

In the rest of this section we consider scaling invariant problems
corresponding to the Lotka-Volterra systems 
without assumption \eqref{bc-const} 
or in the case of unequal diffusion coefficients $d_1\ne d_2$.

\begin{theorem} \label{thm-liouv2}
Assume $b_1=b_2=0$, $c_1,c_2>0$ and $n\leq5$.
Let $(u,v)$ be a nonnegative bounded solution of \eqref{lv2} with $X=\R^n$.
Then either $(u,v)\equiv (C,0)$  or $(u,v)\equiv(0,C)$ 
for some $C\geq0$.
\end{theorem}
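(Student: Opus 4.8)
The plan is to exploit the algebraic structure of \eqref{lv2} in the degenerate case $b_1=b_2=0$, in which the two reaction terms $c_1uv$ and $c_2uv$ agree up to a constant factor. Multiplying the first equation of \eqref{lv2} by $c_2$, the second by $c_1$ and subtracting, one finds that $w:=c_2u-c_1v$ solves the heat equation $w_t-\Delta w=0$ on $\R^n\times\R$, and $w$ is bounded since $(u,v)$ is. First I would invoke the Liouville theorem for entire bounded caloric functions (such a function is constant --- which follows from the interior gradient estimate $\|\nabla w(\cdot,t)\|_\infty\le C(t-s)^{-1/2}\|w\|_\infty$ by letting $s\to-\infty$, so that $\nabla w\equiv0$ and hence $w_t=\Delta w=0$) to conclude $w\equiv a$ for some constant $a\in\R$. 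It then remains to analyse the three cases $a=0$, $a>0$ and $a<0$.

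If $a=0$, then $c_2u\equiv c_1v$, and substituting $v=(c_2/c_1)u$ into the first equation of \eqref{lv2} shows that $u\ge0$ is a bounded entire solution of the scalar Fujita-type equation $u_t-\Delta u=c_2u^2$ on $\R^n\times\R$. Since $n\le5$ makes the power $2$ Sobolev-subcritical, the parabolic Liouville theorems of \cite{BV,PQS2} give $u\equiv0$, hence also $v\equiv0$; so $(u,v)\equiv(0,0)$, which has the required form with $C=0$.

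If $a\ne0$ I would treat, say, the case $a>0$ (the case $a<0$ being symmetric after interchanging the roles of $u$ and $v$). Then $c_2u=c_1v+a\ge a$, so $u\ge a/c_2>0$ everywhere, and the second equation of \eqref{lv2} yields the differential inequality $v_t-\Delta v=c_2uv\ge av$, with $a>0$ and $v\ge0$ bounded. The idea now is a standard large-ball/principal-eigenvalue argument: for $R>0$ let $\phi_R>0$ be the first Dirichlet eigenfunction of $-\Delta$ on $B_R$, with eigenvalue $\lambda_1(B_R)=\lambda_1(B_1)/R^2\to0$ as $R\to\infty$, and put $I(t):=\int_{B_R}v(x,t)\phi_R(x)\,dx$. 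Testing the inequality by $\phi_R$ and integrating by parts twice, using $\phi_R=0$ and $\partial_\nu\phi_R\le0$ on $\partial B_R$ together with $v\ge0$ (so the boundary term $-\int_{\partial B_R}v\,\partial_\nu\phi_R$ is nonnegative), one obtains $I'(t)\ge\bigl(a-\lambda_1(B_R)\bigr)I(t)$. Choosing $R$ so large that $a-\lambda_1(B_R)>0$ and integrating, $I$ would have to grow exponentially as $t\to\infty$ unless $I\equiv0$; but $I$ is bounded because $v$ is, so $I\equiv0$, and since $\phi_R>0$ in $B_R$ this forces $v\equiv0$ on $B_R\times\R$. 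As $R$ is arbitrary, $v\equiv0$ on $\R^n\times\R$; then $c_2u\equiv a$, i.e.\ $(u,v)\equiv(a/c_2,0)$, again of the required form.

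Almost everything here is classical or already contained in the quoted references, so the step needing real attention is the case $a\ne0$. This case is \emph{not} vacuous --- it produces precisely the constant solutions $(C,0)$ and $(0,C)$ with $C>0$ --- and it is exactly the boundedness of $(u,v)$, confronted with the exponential growth forced by the linear term $av$ with $a>0$, that excludes all other solutions. The only routine care needed is the sign bookkeeping in the integration by parts and applying the heat-equation Liouville theorem only to the bounded function $w$.
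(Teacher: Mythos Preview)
Your proof is correct and follows the same overall strategy as the paper: form the linear combination $w=c_2u-c_1v$ (the paper first rescales to $c_1=c_2=1$ and takes $w=u-v$, which is equivalent), invoke the Liouville theorem for bounded entire caloric functions to get $w\equiv\mathrm{const}$, and then split into the cases $w\equiv0$ and $w\not\equiv0$. The case $w\equiv0$ is handled identically, via the scalar parabolic Liouville theorem.

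The one genuine difference is in the case $a\ne0$. The paper observes that $v_t-\Delta v=(v+D)v\ge d\,v^p$ for a suitable $p\in(1,1+2/n]$ and then appeals to the Fujita theorem to force $v\equiv0$. Your argument is more elementary: from $v_t-\Delta v\ge av$ with $a>0$ you test against the first Dirichlet eigenfunction on a large ball and obtain exponential growth of $\int_{B_R}v\phi_R$, contradicting boundedness. Both routes are short; yours avoids invoking Fujita and uses only the \emph{linear} lower bound, while the paper's argument shows that even a weaker superlinear lower bound already suffices. Either is perfectly adequate here.
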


\begin{proof}
Scaling arguments show that we may assume $c_1=c_2=1$.
The function $w:=u-v$ is a bounded entire solution
of the linear heat equation hence $w\equiv D$ for some constant $D$
(see \cite[Theorem 1]{Eid}).
W.l.o.g.\ we may assume $D\geq0$.
If $D=0$ then $u=v$ and the Liouville theorem 
\cite[Theorem 21.2]{QS} guarantees $u\equiv v\equiv0$.
If $D>0$ then given $p\in(1,\min(2,1+2/n)]$ there exists $d=d(p,D)>0$ such that  
$$ v_t-\Delta v=uv=(v+D)v\geq dv^p, $$
and the Fujita theorem \cite[Theorem 18.1]{QS}
together with the comparison principle
imply $v\equiv0$. 
\end{proof}

The existence of semi-trivial entire solutions 
of the form $(C,0)$ and $(0,C)$ with $C>0$
disables one 
to use standard scaling arguments to
prove a priori estimates of solutions in a straightforward way.
However, at least in the case of similar elliptic systems,
existence of semi-trivial entire solutions 
represents just a technical difficulty
and the scaling arguments do apply,   
see \cite{Zou}.

\begin{remark} \label{rem2} \rm
If one considers \eqref{lv2} with $X=\R^n$, $b_1,b_2\leq0$ and $c_1,c_2>0$,
for example, then the function
$w:=\sqrt{uv}$ satisfies $w_t-\Delta w\geq cw^2$ for some $c>0$,
hence the Fujita theorem
\cite[Theorem 18.1]{QS} guarantees $w\equiv0$ if $n\leq 2$
(and similar result can be obtained for the Dirichlet
problem in the halfpace if $n=1$,
see \cite[Remark 18.6(i)]{QS}).
If $b_1,b_2\ne0$ then this implies
$u\equiv v\equiv0$, and these Liouville theorems for \eqref{lv2}
enable one to prove universal estimates
of solutions of \eqref{lv1} with $d_1=d_2=1$.
In addition, the Fujita-type theorems mentioned above
and comparison with suitable subsolutions
enables one to prove the Liouville theorems
even for the generalization of \eqref{lv2}
with unequal diffusion coefficients $d_1\ne d_2$,
see \cite{Lin}.
However, the restrictions $n\leq2$ and $n=1$ (in the case
of the Dirichlet problem) seem to be far from optimal.
\end{remark}
 
%--------------------------------------------
\section{Universal estimates} \label{sec-ue}

\begin{proof}[Proof of Theorem~\ref{thm-ub}]
The proof follows those of \cite[Theorem 3.1 and 4.1]{PQS2}
and we just sketch it.

In order to prove estimate \eqref{est4} with $C_1=C_2=1$
we will follow the proof of \cite[Theorem 3.1(i)]{PQS2}.
Assume that estimate \eqref{est4} fails.
Then, for $k=1,2,\dots$, 
there exist nonempty open sets $\Omega_k$, $T_k\in(0,\infty]$,
coefficients $a_{i,k},b_{i,k},c_{i,k}$, $i=1,2$,
satisfying \eqref{bc1} with $\Omega$ replaced by $\Omega_k$,
solutions $(u_k,v_k)$ of \eqref{lv11} with $\Omega,T,a_1,a_2,b_1,b_2,c_1,c_2$
replaced by $\Omega_k,T_k,a_{1,k},a_{2,k},b_{1,k},b_{2,k},c_{1,k},c_{2,k}$
and points $(y_k,\tau_k)\in D_k:=\Omega_k\times(0,T_k)$ such that
\begin{equation} \label{Mkbig}
M_k(y_k,\tau_k)>2k(1+d_P^{-1}((y_k,\tau_k),\partial D_k)),
\end{equation}
where  $M_k:=\sqrt{u_k+v_k}$ and 
$$d_P((x,t),(y,\tau)):=|x-y|+|t-s|^{1/2}$$
denotes the parabolic distance. The Doubling lemma 
\cite[Lemma 5.1]{PQS1} guarantees the existence of $(x_k,t_k)\in D_k$
such that
$$
\begin{aligned}
 M_k(x_k,t_k) &\geq M_k(y_k,\tau_k),\quad
  M_k(x_k,t_k)>2kd_P^{-1}((x_k,t_k),\partial D_k), \\
 M_k(x,t) &\leq M_k(x_k,t_k) 
 \quad\hbox{whenever}\quad d_P((x,t),(x_k,t_k))\leq k\lambda_k,
\end{aligned}
$$
where
$$ \lambda_k:=M_k^{-1}(x_k,t_k)\to 0.$$
Set
$$ \begin{aligned}
   \tilde u_k(y,s) &:=\lambda_k^2u_k(x_k+\lambda_ky,t_k+\lambda_k^2s), \\
   \tilde v_k(y,s) &:=\lambda_k^2v_k(x_k+\lambda_ky,t_k+\lambda_k^2s), \\
   \tilde a_{1,k}(y,s) &:= a_{1,k}(x_k+\lambda_ky,t_k+\lambda_k^2s),
\end{aligned}
$$
and  define 
$\tilde a_{2,k},\tilde b_{1,k},\tilde b_{2,k},\tilde c_{1,k},\tilde c_{2,k}$
analogously.
Then $(\tilde u_k,\tilde v_k)$ solve the system
$$
\begin{aligned}
\tilde u_t-d_1\Delta \tilde u &= \tilde u(\tilde a_{1,k}\lambda_k^2-\tilde b_{1,k}\tilde u+\tilde c_{1,k}\tilde v), \\
\tilde v_t-d_2\Delta \tilde v &= \tilde v(\tilde a_{2,k}\lambda_k^2-\tilde b_{2,k}\tilde v+\tilde c_{2,k}\tilde u),
\end{aligned}
$$
in the corresponding rescaled region, $\tilde u_k(0,0)+\tilde v_k(0,0)=1$
and $u_k+v_k\leq4$ in $\tilde D_k:=\{y\in\R^n:|y|<k/2\}\times(-k^2/4,k^2/4)$.
Passing to a subsequence we may assume 
$\tilde b_{i,k}(0,0)\to\tilde b_i>0$ and 
$\tilde c_{i,k}(0,0)\to\tilde c_i>0$, $i=1,2$, 
where $\tilde c_1\tilde c_2>\tilde b_1\tilde b_2$.
Now standard regularity estimates guarantee that a subsequence of 
$(\tilde u_k,\tilde v_k)$ converges to a nontrivial nonnegative solution
of \eqref{lv2}
with $X=\R^n$ and $b_1,b_2,c_1,c_2$ replaced by $\tilde b_1,\tilde b_2,\tilde c_1,\tilde c_2$,
which contradicts Theorem~\ref{thm-liouv1}.

If $\Omega$ is smooth
and the solution $(u,v)$ satisfies
the homogeneous Dirichlet or Neumann boundary condition
on $\partial\Omega\times(0,T)$
then one just has to modify the proof
in the same way as in the proof of \cite[Theorem 4.1]{PQS2}.
If $a_1=a_2=0$ and $b_1,b_2,c_1,c_2$ are constants
then it is sufficient to replace the inequality 
\eqref{Mkbig} with
$$M_k(y_k,\tau_k)>2k d_P^{-1}((y_k,\tau_k),\partial D_k)$$
and notice that $\lambda_k$ need not converge to zero 
(cf.~the proof of \cite[Theorem 3.1(ii)]{PQS2}).
\end{proof}

In the same way as in the proof Theorem~\ref{thm-ub},
using Theorem~\ref{thm-liouv-per} instead of Theorem~\ref{thm-liouv1}
and assuming \eqref{ass-per},
we obtain the following universal bounds for periodic solutions
of the homotopy problem
\begin{equation} \label{sys-hom}
\begin{aligned}
&\left.\begin{aligned}
u_t-\Delta u &= \lambda u(a_1-b_1u+c_1v)+(1-\lambda)(\Lambda u+K^3v^2) \\
v_t-\Delta v &= \lambda v(a_2-b_2v+c_2u)+(1-\lambda)(\Lambda v+u^2) 
\end{aligned}\quad\right\}\quad\hbox{in }\Omega\times(0,\infty), \\
&\qquad u=v=0\qquad\hbox{on }\partial\Omega\times(0,\infty),
\end{aligned}
\end{equation} 
where 
\begin{equation} \label{LK}
\lambda\in[0,1],\qquad \Lambda>0, \qquad K=K(x,t):=\frac{c_1+b_2}{c_2+b_1}.
\end{equation}

\begin{theorem} \label{thm-ub-per}
Assume \eqref{ass-per} and \eqref{LK}.
Then there exists a positive constant
$C$ dependening only on $\Omega,T,\Lambda,a_1,a_2,b_1,b_2,c_1,c_2$
such that any positive $T$-periodic solution $(u,v)$ of \eqref{sys-hom} satisfies
\begin{equation} \label{est5}
 u(x,t)+v(x,t)\leq C \quad\hbox{for all $(x,t)\in\Omega\times(0,\infty)$}.
\end{equation}
\end{theorem}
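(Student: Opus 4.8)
The plan is to mimic the proof of Theorem~\ref{thm-ub} almost verbatim, using a doubling-and-scaling argument whose limiting system is covered by Theorem~\ref{thm-liouv-per} in place of Theorem~\ref{thm-liouv1}. First I would argue by contradiction: if \eqref{est5} fails uniformly, then for $k=1,2,\dots$ there exist $\lambda_k\in[0,1]$, coefficients $a_{i,k},b_{i,k},c_{i,k}$ satisfying \eqref{ass-per}, $\Lambda>0$ fixed, positive $T$-periodic solutions $(u_k,v_k)$ of \eqref{sys-hom}, and points $(y_k,\tau_k)\in D:=\Omega\times(0,\infty)$ (using $T$-periodicity we may take $\tau_k\in(0,T]$) with $M_k(y_k,\tau_k)>2k\bigl(1+d_P^{-1}((y_k,\tau_k),\partial D)\bigr)$, where $M_k:=\sqrt{u_k+v_k}$. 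Applying the Doubling lemma \cite[Lemma 5.1]{PQS1} produces $(x_k,t_k)\in D$ with $M_k(x_k,t_k)\geq M_k(y_k,\tau_k)$, $M_k(x_k,t_k)>2kd_P^{-1}((x_k,t_k),\partial D)$, and $M_k(x,t)\leq M_k(x_k,t_k)$ whenever $d_P((x,t),(x_k,t_k))\leq k\lambda_k$, with $\lambda_k:=M_k^{-1}(x_k,t_k)\to0$.

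Next I would rescale. Set $\tilde u_k(y,s):=\lambda_k^2u_k(x_k+\lambda_ky,t_k+\lambda_k^2s)$ and $\tilde v_k(y,s):=\lambda_k^2v_k(x_k+\lambda_ky,t_k+\lambda_k^2s)$, and analogously freeze the coefficients and the weight $K$ along the same rescaling. Because the zeroth-order parts $\lambda u a_1$, $(1-\lambda)\Lambda u$, $(1-\lambda)\Lambda v$ carry an extra factor $\lambda_k^2\to0$ after rescaling, while the quadratic terms $\lambda u(-b_1u+c_1v)$, $(1-\lambda)K^3v^2$, $(1-\lambda)u^2$, $\lambda v(-b_2v+c_2u)$ are scaling invariant, the rescaled pair $(\tilde u_k,\tilde v_k)$ solves a system whose lower-order terms vanish in the limit. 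Passing to a subsequence, $\lambda_k\to\lambda_\infty\in[0,1]$, $b_{i,k}(x_k,t_k)\to b_i$, $c_{i,k}(x_k,t_k)\to c_i$ satisfying \eqref{bc-const} (here the uniform positivity $c_1c_2>b_1b_2$ on the compact $\overline\Omega\times[0,T]$ together with continuity of the coefficients is used), and $K(x_k,t_k)\to K=(c_1+b_2)/(c_2+b_1)$. Standard parabolic interior (and, near the boundary, up-to-the-boundary) regularity estimates — using that $\tilde u_k+\tilde v_k\leq4$ on large parabolic cylinders and $\tilde u_k(0,0)+\tilde v_k(0,0)=1$ — give locally uniform convergence of a subsequence to a nonnegative, nontrivial, bounded solution $(u,v)$ of \eqref{lv3} with $\lambda=\lambda_\infty$ on $X\times\R$, where $X=\R^n$ if $\operatorname{dist}(x_k,\partial\Omega)/\lambda_k\to\infty$ and $X=\R^n_+$ with the homogeneous Dirichlet condition otherwise. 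This contradicts Theorem~\ref{thm-liouv-per}, and the contradiction proves \eqref{est5}.

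The one genuinely new bookkeeping point, and the place where I expect to have to be slightly careful, is the handling of the homotopy parameter and the spatially varying weight $K$ simultaneously: one must check that the limiting system one lands on is exactly \eqref{lv3} for a single fixed $\lambda_\infty$ and a single constant $K$, i.e.\ that $K(x,t)$ — which is continuous and bounded above and below on $\overline\Omega\times[0,T]$ under \eqref{ass-per} — converges along the rescaling to a constant and that the $(1-\lambda)$ weight behaves correctly in the limit; the Dirichlet/Neumann case splitting near $\partial\Omega$ goes through exactly as in the proof of \cite[Theorem 4.1]{PQS2} and as sketched in the proof of Theorem~\ref{thm-ub}. Everything else is routine: the $T$-periodicity is used only to confine $\tau_k$ (hence $t_k$ modulo a translation) and to ensure the solutions are globally defined on $(0,\infty)$ so the Doubling lemma applies without a finite-time boundary, and the extra $\Lambda$-terms are precisely what make the homotopy problem \eqref{sys-hom} well behaved at $\lambda=0$ without changing the scaling limit.
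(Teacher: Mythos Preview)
Your proposal is correct and follows essentially the same approach as the paper: the paper's proof of this theorem consists only of the remark that one repeats the proof of Theorem~\ref{thm-ub} with Theorem~\ref{thm-liouv-per} replacing Theorem~\ref{thm-liouv1}, and you have spelled out exactly that argument, including the correct handling of the extra lower-order terms under rescaling, the passage $\lambda_k\to\lambda_\infty$, the freezing of the continuous coefficients and of $K$ at the blow-up points, and the interior/half-space dichotomy with Dirichlet data. One small cosmetic point: since the constant $C$ in the statement is allowed to depend on the fixed coefficients $a_i,b_i,c_i$, there is no need to let them vary along the contradicting sequence; only $\lambda_k$ genuinely varies.
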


%--------------------------------------------
\section{Periodic solutions} \label{sec-ps}

In this section 
we assume \eqref{ass-per}.
In addition, by ${\cal X}:=BUC(\Omega\times(0,T))$
we denote the space of bounded uniformly continuous
functions equipped with the $L^\infty$-norm $\|\cdot\|_\infty$,
and $w^+(x,t):=\max(w(x,t),0)$. 
Without fearing confusion, by $\|\cdot\|_\infty$ we denote
both the norm in $L^\infty(\Omega\times(0,T))$ and $L^\infty(\Omega)$.

In the proof of our main result we will need the following
proposition on (possibly sign-changing) solutions.

\begin{proposition} \label{propBH}
Let $\Omega\subset\R^n$ be a $C^3$-smooth bounded domain, $T>0$ and
$f\in{\cal X}$.
Then the scalar periodic problem
\begin{equation} \label{eqn-per}
\left.\begin{aligned}
w_t-\Delta w &= f         &\qquad&\hbox{in }\Omega\times(0,T),\\
           w &= 0         &\qquad&\hbox{on }\partial\Omega\times(0,T),\\ 
  w(\cdot,0) &= w(\cdot,T)&\qquad&\hbox{in }\Omega,
\end{aligned}\quad\right\}
\end{equation}
possesses a unique solution $w$. In addition, 
the mapping ${\cal K}:{\cal X}\to{\cal X}:f\mapsto w^+$ is compact.
\end{proposition}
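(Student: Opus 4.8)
The plan is to establish existence and uniqueness first, and then compactness, treating the two halves separately.

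\textbf{Existence and uniqueness.}
For uniqueness, if $w_1,w_2$ both solve \eqref{eqn-per} then their difference $z:=w_1-w_2$ solves the homogeneous problem $z_t-\Delta z=0$ in $\Omega\times(0,T)$, $z=0$ on the lateral boundary, $z(\cdot,0)=z(\cdot,T)$. Testing the equation with $z$ and integrating over $\Omega$ gives $\frac{d}{dt}\|z(\cdot,t)\|_{L^2(\Omega)}^2=-2\|\nabla z(\cdot,t)\|_{L^2(\Omega)}^2\le 0$, so $t\mapsto\|z(\cdot,t)\|_{L^2(\Omega)}^2$ is nonincreasing; periodicity then forces it to be constant, hence $\nabla z\equiv0$, hence $z\equiv0$ (using the Dirichlet condition). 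For existence, I would use the solution semigroup $e^{t\Delta}$ of the Dirichlet heat equation on $L^2(\Omega)$ (or on $C_0(\overline\Omega)$): the Duhamel formula shows a $T$-periodic solution exists iff the initial value $w(\cdot,0)=:w_0$ satisfies the fixed-point relation $w_0=e^{T\Delta}w_0+\int_0^T e^{(T-s)\Delta}f(\cdot,s)\,ds$, i.e.\ $(I-e^{T\Delta})w_0=\int_0^T e^{(T-s)\Delta}f(\cdot,s)\,ds$. Since the Dirichlet Laplacian has spectrum bounded above by $-\Lambda_1<0$, the operator $e^{T\Delta}$ has spectral radius $e^{-\Lambda_1 T}<1$, so $I-e^{T\Delta}$ is boundedly invertible and $w_0$ is uniquely determined; the resulting $w$ is then a (classical, by parabolic regularity, since $f\in{\cal X}$) solution. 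One should also note the $T$-periodic extension of $f$ to $\Omega\times\R$ is again bounded and uniformly continuous, so the semigroup computations make sense and standard parabolic $L^p$- and Schauder-type estimates apply to upgrade regularity; in particular $w\in{\cal X}$.

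\textbf{Compactness of ${\cal K}$.}
The map $f\mapsto w$ is clearly linear and, by the above, bounded from ${\cal X}$ to ${\cal X}$; composing with the continuous (but nonlinear) map $w\mapsto w^+$ preserves only continuity, not linearity, but continuity of $w\mapsto w^+$ on ${\cal X}$ is immediate from $|w_1^+-w_2^+|\le|w_1-w_2|$ pointwise. So it suffices to show that $f\mapsto w$ maps bounded subsets of ${\cal X}$ into \emph{relatively compact} subsets of ${\cal X}$; composition with the continuous map $(\cdot)^+$ then preserves relative compactness. The key point is a gain of regularity: for $\|f\|_\infty\le R$, interior-in-time parabolic estimates (away from $t=0$, but periodicity removes the issue — one solves on $\Omega\times(t_0,t_0+T)$ for all $t_0$) together with boundary regularity of the $C^3$ domain give a uniform bound on $w$ in, say, the parabolic H\"older space $C^{2+\alpha,1+\alpha/2}(\overline\Omega\times\R)$, or at least a uniform $C^{\alpha,\alpha/2}$ bound plus a uniform Lipschitz-type modulus. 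By the Arzel\`a--Ascoli theorem this set is relatively compact in $C(\overline\Omega\times[0,T])$, hence in ${\cal X}$. Applying $w\mapsto w^+$, which is continuous and maps relatively compact sets to relatively compact sets, yields that ${\cal K}$ sends bounded sets to relatively compact sets; being continuous as well, ${\cal K}$ is compact.

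\textbf{Main obstacle.}
The only real subtlety is the periodicity bookkeeping in the regularity step: standard parabolic Schauder estimates are stated for Cauchy problems and a priori lose control near the initial time, whereas here there is no initial time. The clean fix is to exploit periodicity — extend $w$ and $f$ $T$-periodically to $\Omega\times\R$ and apply interior (in $t$) estimates on, e.g., $\Omega\times(-T,2T)$ to bound $w$ on $\Omega\times[0,T]$ — so that the loss near the endpoints of an artificially chosen time interval never arises. One must also verify that the invertibility of $I-e^{T\Delta}$ works in the sup-norm setting and not merely in $L^2$; this follows because $e^{T\Delta}$ is also a bounded operator on $C_0(\overline\Omega)$ with spectral radius $e^{-\Lambda_1 T}$ (the Dirichlet heat semigroup is positivity-improving and ultracontractive, so its $L^\infty$ and $L^2$ spectral radii coincide). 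Everything else is routine linear parabolic theory.
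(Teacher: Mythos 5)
Your argument is correct in substance, but it takes a genuinely different route from the paper. The paper does not construct the solution directly: it invokes the result of Beltramo--Hess \cite{BH} (which treats H\"older continuous $f$) and then handles $f\in{\cal X}$ by mollifying $f$, passing to the limit using the a priori bound $\|w\|_\infty\leq C(T)\|f\|_\infty$ obtained from the variation-of-constants formula and the decay $e^{-\Lambda_1(t_2-t_1)}$, together with smoothing estimates for the initial value problem. You instead build the periodic solution from scratch via the fixed-point relation $(I-e^{T\Delta})w_0=\int_0^Te^{(T-s)\Delta}f(\cdot,s)\,ds$ and the invertibility of $I-e^{T\Delta}$ (spectral radius $e^{-\Lambda_1 T}<1$ on $C_0(\overline\Omega)$ as well as on $L^2$), which is self-contained and avoids the external reference; the paper's route is shorter on the page but leans on \cite{BH} and on an approximation argument it only sketches. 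Both approaches use the same underlying mechanism (exponential decay of the Dirichlet heat semigroup) for uniqueness and for the $L^\infty$ bound, and both get compactness from parabolic smoothing plus Arzel\`a--Ascoli and the Lipschitz continuity of $w\mapsto w^+$. One caveat: for $f$ merely in ${\cal X}$ the solution is in general \emph{not} classical and you cannot get a uniform $C^{2+\alpha,1+\alpha/2}$ bound as you first suggest (this is why the paper explicitly exempts Proposition~\ref{propBH} from its convention that solutions are classical); your fallback — uniform $W^{2,1}_p$ bounds by maximal regularity, hence a uniform $C^{1+\alpha,(1+\alpha)/2}$ bound, which is all Arzel\`a--Ascoli needs — is the correct version and should be the one you commit to.
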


\begin{proof}
The assertion was proved in \cite{BH} for $f$ being H\"older continuous.
In our case it is suffcient to combine this result with standard
mollifying arguments and $L^\infty$- and smoothing estimates 
for the corresponding initial value problem.
For example, if $w$ is a periodic solution of \eqref{eqn-per}
then the variation-of-constants formula yields the estimate
$$ \|w(\cdot,t_2)\|_\infty\leq
e^{-\Lambda_1(t_2-t_1)}\|w(\cdot,t_1)\|_\infty+(t_2-t_1)\|f\|_\infty,$$
and, consequently, the periodicity of $w$ guarantees
$\|w\|_\infty\leq C(T)\|f\|_\infty$.
\end{proof}

We will also need the following proposition on the adjoint eigenvalue
problem.

\begin{proposition} \label{propD}
Let $\Omega\subset\R^n$ be a $C^3$-smooth bounded domain and $T>0$. 
Then there exists $\Lambda_1^T>0$ such that the problem
\begin{equation} \label{eqn-dual}
\left.\begin{aligned}
-\varphi_t-\Delta\varphi &= \Lambda_1^T\varphi  &\qquad&\hbox{in }\Omega\times(0,T),\\
           \varphi &= 0         &\qquad&\hbox{on }\partial\Omega\times(0,T),\\
  \varphi(\cdot,0) &= \varphi(\cdot,T)&\qquad&\hbox{in }\Omega,
\end{aligned}\quad\right\}
\end{equation}
possesses a positive solution $\varphi$.
\end{proposition}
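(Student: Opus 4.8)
The plan is to reduce \eqref{eqn-dual} to a standard (forward) periodic--parabolic eigenvalue problem and then exploit the fact that the spatial operator $-\Delta$ does not depend on $t$. Reversing time via $s:=T-t$, $\psi(x,s):=\varphi(x,T-s)$, turns \eqref{eqn-dual} into
\[
\psi_s-\Delta\psi=\Lambda_1^T\psi\ \hbox{ in }\Omega\times(0,T),\qquad
\psi=0\ \hbox{ on }\partial\Omega\times(0,T),\qquad
\psi(\cdot,0)=\psi(\cdot,T),
\]
so it suffices to produce a value $\Lambda_1^T>0$ together with a strictly positive $T$-periodic solution of this problem, and then translate back.

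Since the coefficients are autonomous, the cleanest route is separation of variables. Let $\phi_1$ denote the principal Dirichlet eigenfunction of $-\Delta$ on $\Omega$, so $-\Delta\phi_1=\Lambda_1\phi_1$ in $\Omega$ and $\phi_1=0$ on $\partial\Omega$; recall $\Lambda_1>0$, and since $\Omega$ is a bounded domain (hence connected) $\phi_1$ may be chosen strictly positive in $\Omega$, by simplicity of the principal eigenvalue and the strong maximum principle. Taking $\Lambda_1^T:=\Lambda_1$ and $\psi(x,s):=\phi_1(x)$ (constant in $s$), we get $\psi_s-\Delta\psi=-\Delta\phi_1=\Lambda_1\phi_1=\Lambda_1^T\psi$, the Dirichlet condition holds, and $\psi$ is trivially $T$-periodic. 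Undoing the time reversal, $\varphi(x,t):=\phi_1(x)$ is a positive solution of \eqref{eqn-dual} with $\Lambda_1^T=\Lambda_1>0$. The $C^3$-smoothness of $\partial\Omega$ gives whatever boundary regularity of $\varphi$ is needed for the multiplier arguments in Section~\ref{sec-ps}.

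For completeness, and to justify the notation $\Lambda_1^T$ (which for a genuinely time-dependent operator need not equal $\Lambda_1$), one may instead argue abstractly via the periodic--parabolic principal eigenvalue theory: the period map of the resolvent of $-\partial_s-\Delta$ with homogeneous Dirichlet data is compact and positivity-improving, by parabolic smoothing together with the parabolic strong maximum principle and the Hopf boundary lemma, so the appropriate version of the Krein--Rutman theorem (for $u_0$-positive operators) yields a simple eigenvalue $\Lambda_1^T>0$ with a strictly positive eigenfunction $\varphi$; this is precisely the framework of Hess's monograph on periodic--parabolic problems. In fact this route also recovers $\Lambda_1^T=\Lambda_1$ in the present autonomous case. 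There is no serious obstacle here: the only point requiring care is the backward time direction in \eqref{eqn-dual}, and once the time reversal is performed the positivity of $\Lambda_1^T$ is immediate because $\Lambda_1^T=\Lambda_1>0$.
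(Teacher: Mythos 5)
Your proof is correct, and your primary argument is genuinely more elementary than what the paper does. The paper simply invokes the Beltramo--Hess theory of the principal eigenvalue for periodic-parabolic operators \cite{BH} (cf.\ also \cite[(18)]{Est}), i.e.\ the abstract Krein--Rutman route you sketch in your last paragraph. You instead observe that the operator here is autonomous, so separation of variables applies: the time-independent function $\varphi(x,t):=\phi_1(x)$ satisfies $-\varphi_t-\Delta\varphi=-\Delta\phi_1=\Lambda_1\phi_1$, and one may take $\Lambda_1^T:=\Lambda_1>0$ outright (the time reversal you perform is harmless but not even needed for a $t$-independent candidate). This explicit construction buys self-containedness and avoids any appeal to periodic-parabolic eigenvalue theory; it is entirely sufficient for the only use of Proposition~\ref{propD} in the paper, namely the duality argument at the end of the proof of Theorem~\ref{thm-per}, which only requires some positive periodic eigenfunction with eigenvalue $\Lambda_1^T<\Lambda=\Lambda_1^T+1$. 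What the paper's citation buys is generality: the statement and notation $\Lambda_1^T$ are phrased so as to cover genuinely time-dependent operators, for which your separation-of-variables shortcut is unavailable and the Krein--Rutman machinery is the standard tool — a point you correctly acknowledge. Both routes are sound; yours is the shorter one in the autonomous setting at hand.
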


\begin{proof}
The result follows again from \cite{BH}; cf.~also \cite[(18)]{Est}.
\end{proof}

\begin{proof}[Proof of Theorem~\ref{thm-per}]
Let ${\cal K}$ be the compact mapping from Proposition~\ref{propBH}.
First notice that fixed points of the compact operator
$$ \begin{aligned}
&{\cal T}:\,{\cal X}\times{\cal X}\to{\cal X}\times{\cal X} \\
&{\cal T}(u,v):=({\cal K}(u(a_1-b_1u+c_1v)),{\cal K}(v(a_2-b_2v+c_2u))),
\end{aligned}$$
correspond to nonnegative periodic solutions of our problem.
In fact, if $(u,v)\ne(0,0)$ is a fixed point of ${\cal T}$ then
$u=w^+$ and $v=z^+$, where $(w,z)$ are $T$-periodic solutions
of 
$$ 
\begin{aligned}
&\left.\begin{aligned}
w_t-\Delta w &=  w^+(a_1-b_1w^+ +c_1z^+) \\
z_t-\Delta z &=  z^+(a_2-b_2z^+ +c_2w^+) 
\end{aligned}\quad\right\}&\quad&\hbox{in }\Omega\times(0,\infty), \\
&\qquad\qquad w=z=0&\quad&\hbox{on }\partial\Omega\times(0,\infty),
\end{aligned}
$$
and the maximum principle guarantees $w,z\geq0$. 

Notice also that nontrivial nonnegative periodic solutions of \1D
are positive. In fact, by the maximum principle it is sufficient
to exclude the possibility $v\equiv0$ (or $u\equiv0$).
If, for example, $v\equiv0$ then the assumptions $a_1<\Lambda_1$
and $b_1>0$ guarantee that 
the function $t\mapsto\int_\Omega u(\cdot,t)\varphi_1\,dx$
(where $\varphi_1$ is a positive eigenfunction
of the negative Dirichlet Laplacian corresponding to
the eigenvalue $\Lambda_1$)
is time-decreasing which contradicts the periodicity of $u$.
In fact, multiplying the first equation in \eqref{lv11}
by $\varphi_1$ and integrating over $\Omega$ yields
$$
\frac{d}{dt}\int_\Omega u(\cdot,t)\varphi_1\,dx
 =\int_\Omega(a_1-\Lambda_1)u(\cdot,t)\varphi_1\,dx
 -\int_\Omega b_1 u^2(\cdot,t)\varphi_1\,dx<0. 
$$
We will prove
the existence of a nontrivial fixed point of ${\cal T}$
(hence a positive periodic solution of \1D)
by computing the Leray-Schauder degree
$d(r):=\hbox{deg}\,(I-{\cal T},B_r,0)$
for small and large $r$, where
$I$ denotes the identity and $B_r$ is the ball
in ${\cal X}\times{\cal X}$ with radius $r$  centered at zero.
In fact, we will prove $d(r)=1$ if $r>0$ is sufficiently small
and $d(r)=0$ if $r>0$ is large enough.

First consider $r$ small.
The assertion $d(r)=1$ follows by using the homotopy 
$${\cal T}_\lambda(u,v):=({\cal K}(\lambda u(a_1-b_1u+c_1v)),
                          {\cal K}(\lambda v(a_2-b_2v+c_2u))),
 \quad \lambda\in[0,1].$$
To show that this homotopy is admissible, assume 
that there exists a nontrivial fixed point $(u,v)$ of ${\cal T}_\lambda$
satisfying $\|(u,v)\|_\infty=r\ll1$.
Fix $t$ such that $\|(u(\cdot,t),v(\cdot,t))\|_\infty=r$.
W.l.o.g.\ we may assume $\|u(\cdot,t)\|_\infty=r$.
Notice that $(u,v)$ is a positive periodic solution of \1D with
the right-hand sides multiplied by $\lambda$,
and the variation-of-constants formula yields
$$ \begin{aligned}
r &= \|u(\cdot,t+T)\|_\infty\leq e^{-(\Lambda_1-\lambda\max a_1)T}\|u(\cdot,t)\|_\infty
   +CT\lambda(\|u\|_\infty^2+\|u\|_\infty\|v\|_\infty) \\
  &\leq e^{-(\Lambda_1-\max a_1)T}r+2CTr^2,
\end{aligned}
$$
which yields a contradiction for $r$ small.

Now consider $r$ large.
We will use the homotopy
$$ \begin{aligned}
{\cal T}_\lambda(u,v):=&
({\cal K}(\lambda u(a_1-b_1u+c_1v)+(1-\lambda)(\Lambda u+K^3v^2)),\\
&\ {\cal K}(\lambda v(a_2-b_2v+c_2u)+(1-\lambda)(\Lambda v+u^2))),
\quad \lambda\in[0,1],
\end{aligned}
$$
where $\Lambda:=\Lambda_1^T+1$.
Estimates in Theorem~\ref{thm-ub-per} guarantee that this homotopy
is admissible if $r$ is large enough.
Hence it is sufficient to show that problem
\eqref{sys-hom} does not possess positive periodic solutions
if $\lambda=0$.

Assume on the contrary that $(u,v)$ is a positive $T$-periodic
solution of the system
$$
\begin{aligned}
&\left.\begin{aligned}
u_t-\Delta u &= \Lambda u+K^3v^2 \\
v_t-\Delta v &= \Lambda v+u^2 
\end{aligned}\quad\right\}&\quad&\hbox{in }\Omega\times(0,\infty), \\
&\qquad\qquad u=v=0 &\qquad&\hbox{on }\partial\Omega\times(0,\infty).
\end{aligned}
$$
Multiplying the first equation by the eigenfunction $\varphi$ 
from Proposition~\ref{propD}, integrating over $\Omega\times(0,T)$
and using integration by parts we obtain
$$ \Lambda_1^T\int_0^T\int_\Omega u\varphi\,dx\,dt
  \geq \Lambda \int_0^T\int_\Omega u\varphi\,dx\,dt, $$
which yields a contradiction.
This concludes the proof. 
\end{proof}

%======================================================================
\bibliographystyle{elsarticle-num}

\end{document}